\crefname{appsec}{Appendix}{Appendices}
\theoremstyle{plain}
\newtheorem{theorem}{Theorem}
\newtheorem{lemma}[theorem]{Lemma}
\newtheorem*{conjecture*}{Conjecture}
\theoremstyle{definition}
\newtheorem*{assumption*}{Assumption}
\theoremstyle{remark}
\crefname{lemma}{Lemma}{Lemmas}
\crefname{theorem}{Theorem}{Theorems}
\crefname{definition}{Definition}{Definitions}
\crefname{fact}{Fact}{Facts}
\crefname{claim}{Claim}{Claims}
\crefname{proposition}{Proposition}{Propositions}
\newcommand{\E}{\mathbb{E}}
\renewcommand{\epsilon}{\varepsilon}
\newcommand{\PP}{\mathbb{P}}
\newcommand{\beq}{\begin{equation}}
\newcommand{\eeq}{\end{equation}}
\begin{document}
	
\title{On the Second Kahn--Kalai Conjecture}

\author[E.\ Mossel, J.\ Niles-Weed, N.\ Sun, I.\ Zadik]{Elchanan Mossel$^{\star\circ}$, Jonathan Niles-Weed$^\dagger$, Nike Sun$^\star$, and Ilias Zadik$^\star$}
\thanks{$^\star$Department of Mathematics, MIT;
$^\circ$MIT Institute for Data, Systems, and Society;
$^\dagger$Center for Data Science \& Courant Institute of Mathematical Sciences, NYU. Email: \texttt{\{elmos,nsun,izadik\}@mit.edu}; \texttt{jnw@cims.nyu.edu}}

\date{ \vspace{0.2cm}
\today}

\newcommand{\pcrit}{p_\mathsf{c}}
\newcommand{\qfrac}{q_\mathsf{f}}
\newcommand{\pE}{p_\mathsf{E}}
\newcommand{\pEnew}{\tilde{p}_\mathsf{E}}

\begin{abstract} 
For any given graph $H$, we are interested in $\pcrit(H)$, the minimal $p$ such that the Erd\H{o}s--R\'enyi graph $G(n,p)$ contains a copy of $H$ with probability at least $1/2$. Kahn and Kalai (2007) conjectured that $\pcrit(H)$ is given up to a logarithmic factor by a simpler ``subgraph expectation threshold'' $\pE(H)$, which is the minimal $p$ such that for every subgraph $H'\subseteq H$, the Erd\H{o}s--R\'enyi graph $G(n,p)$ contains \emph{in expectation} at least $1/2$ copies of $H'$. It is trivial that $\pE(H)  \le  \pcrit(H)$, and the so-called ``second Kahn--Kalai conjecture'' states that $\pcrit(H) \lesssim \pE(H) \log e(H)$ where $e(H)$ is the number of edges in $H$.

In this article we present a natural modification $\pEnew(H)$ of the Kahn--Kalai subgraph expectation threshold, which we show is sandwiched between $\pE(H)$ and $\pcrit(H)$. The new definition $\pEnew(H)$ is based on the simple observation that if $G(n,p)$ contains a copy of $H$ and $H$ contains \emph{many} copies of $H'$, then $G(n,p)$ must also contain \emph{many} copies of $H'$. We then show that $\pcrit(H) \lesssim \pEnew(H) \log e(H)$, thus proving a modification of the second Kahn--Kalai conjecture. The bound follows by a direct application of the set-theoretic``spread''  property,  which led to recent breakthroughs in the sunflower conjecture by Alweiss, Lovett, Wu and Zhang and the first fractional Kahn--Kalai conjecture by Frankston, Kahn, Narayanan and Park.

\end{abstract}

\maketitle

\section{Introduction}

In this work, we are interested in the following fundamental question. 
Given a graph $H$, what is the \textit{smallest} value $p=\pcrit(H)$ for which the Erd\H{o}s--R\'enyi graph $G=G(n,p)$ contains an isomorphic copy of $H$ with probability at least $1/2$?\footnote{The graph $H$ is allowed to depend on $n$. Indeed, when $e(H)$ does not grow with $n$ 
the value of $\pcrit(H)$ is known \cite{MR125031, bela_fixed}, so the main contribution of the present paper is in the setting where $e(H)$ grows with $n$.} The value 
$\pcrit(H)$ is often referred to as the ``critical threshold'' for the appearance of $H$. 
A well-known conjecture from \cite{kahn2007thresholds} posits that $\pcrit(H)$ 
is given up to a logarithmic factor by a simpler \emph{``subgraph expectation threshold''} $\pE(H)$, which is the \textit{maximum first-moment threshold among all subgraphs of $H$}.\footnote{Throughout, a ``subgraph'' of a graph $H$ refers to the \textit{edge-induced} subgraph associated with a subset of the edges of $H$.} More precisely, for any (labelled) graphs $H$ and $H’$, let $M_{H’,H}$ denote the number of subgraphs of $H$ which are isomorphic copies of $H’$, and define
	\beq\label{e:pE}
	\pE(H)
	=\min\bigg\{ p : \E M_{H',G(n,p)} \ge \frac12
	\textup{ for all } H' \subseteq H\bigg\}\,.
	\eeq
It is a trivial consequence of Markov's inequality {(see \S\ref{ss:basicobs})} that
$\pE(H)$ lower bounds $\pcrit(H)$. Kahn and Kalai proposed that this easy lower bound may not be far from the truth:

\begin{conjecture*}[{\cite[Conjecture~2]{kahn2007thresholds}}]
It holds that $\pcrit(H) \le L \pE(H) \log e(H)$ for a universal constant $L$.
\end{conjecture*}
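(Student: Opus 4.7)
The strategy is to route the proof through the spread-distribution framework of Alweiss--Lovett--Wu--Zhang and its refinement by Frankston--Kahn--Narayanan--Park (FKNP). These results assert that a monotone graph property holds with high probability in $G(n,q)$ as soon as $q \gtrsim p \log e(H)$, provided one can exhibit a probability measure $\mu$ supported on minimal witnesses of the property --- here, edge sets of copies of $H$ inside $K_n$ --- that is \emph{$p$-spread}, i.e.\ $\mu(\{S : S \supseteq T\}) \le p^{|T|}$ for every edge set $T$. Hence, to prove the stated conjecture it suffices to construct, for each graph $H$, a $p$-spread measure on labeled copies of $H$ in $K_n$ at $p \lesssim \pE(H)$.

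The most natural candidate is the uniform measure $\mu$ on labeled embeddings $\phi : V(H) \hookrightarrow [n]$. For a fixed edge set $T \subseteq E(K_n)$ spanning $v_T$ vertices, a short counting argument yields $\mu(\{\phi : E(\phi) \supseteq T\}) \approx M_{\bar T, H} \cdot |\Aut(\bar T)| / n^{v_T}$, where $\bar T$ denotes the abstract graph underlying $T$. Rewriting the target $p^{|T|}$ as $\E M_{\bar T, G(n,p)} \cdot |\Aut(\bar T)| / n^{v_T}$ collapses the $p$-spread inequality to the condition
\[
M_{H',H} \lesssim \E M_{H', G(n,p)} \quad \text{for every } H' \subseteq H,
\]
which is essentially the definition of the modified threshold $\pEnew$ rather than $\pE$. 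Thus the uniform measure reaches only the bound proved in this paper, not the original Kahn--Kalai conjecture, and an entirely different construction is needed.

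The main obstacle, and the crux of the conjecture, is therefore to exhibit a $p$-spread measure at density $p \lesssim \pE(H)$. My plan is to replace the uniform measure by a carefully weighted one: concentrate mass on those embeddings whose image sits in ``generic'' positions of $K_n$, so that $\mu(\{\phi : \phi \supseteq T\})$ is controlled by the \emph{typical} (average) number of copies of $\bar T$ in $H$ --- which is what $\pE$ governs on average --- rather than by the worst-case multiplicity $M_{\bar T,H}$ that forces the switch to $\pEnew$. Concretely, fixing $p = C\pE(H)$ one sets up a feasibility LP with a probability variable per embedding and one constraint $\mu(\{\phi : \phi \supseteq T\}) \le p^{|T|}$ per abstract subgraph $H' \subseteq H$, and applies LP duality to recast feasibility as a fractional-cover statement reminiscent of Kahn and Kalai's original formulation. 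The analytic step would then be to show, using only the first-moment inequalities encoded in $\pE$, that no dual certificate of infeasibility exists.

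I expect this LP/dual step to be where the proof would stand or fall. It is essentially the heart of the open second Kahn--Kalai conjecture: the existing spread-lemma toolkit translates a \emph{counting} lower bound on subgraph abundance into a spread measure, and bridging the gap between ``at least $1/2$ copies in expectation'' and ``$M_{H',H}$ copies in expectation'' appears to require either a new spread-lemma variant whose hypotheses match first moments directly, or a genuinely global measure-design idea that exploits the interaction among different subgraph types of $H$ beyond what is visible subgraph-by-subgraph. Absent such a new input, the construction proposed in the previous paragraph seems very likely to suffer the same logarithmic loss that forces the paper to settle for $\pEnew$.
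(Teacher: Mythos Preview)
The paper does not prove this statement. The second Kahn--Kalai conjecture is stated as an \emph{open conjecture}; the paper's contribution is Theorem~\ref{thm:main}, which replaces $\pE(H)$ by the larger quantity $\pEnew(H)$. Your proposal is not a proof of the conjecture either, and you essentially say so yourself.

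Your first two paragraphs correctly recover the paper's actual argument: take the uniform measure on copies of $H$, compute that the spread condition reduces to $M_{H',H}\lesssim \E M_{H',G(n,p)}$ for all $H'\subseteq H$, recognise this as the definition of $\pEnew(H)$, and apply the spread lemma. That is precisely the proof of Theorem~\ref{thm:main}, not of the conjecture.

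The remainder of your proposal --- replacing the uniform measure by a weighted one found via an LP, then arguing by duality that no infeasibility certificate exists using only the first-moment inequalities defining $\pE$ --- is a plan, not an argument. You do not exhibit the weighted measure, you do not write down the dual, and you do not indicate why the dual constraints should be satisfiable from the $\pE$ hypothesis alone. You then concede that this step ``is essentially the heart of the open second Kahn--Kalai conjecture'' and that the construction ``seems very likely to suffer the same logarithmic loss.'' That is an accurate assessment: the gap between $\pE$ and $\pEnew$ is exactly the gap between the conjecture and what is currently provable, and nothing in your outline bridges it. So what you have submitted is a correct rederivation of the paper's Theorem~\ref{thm:main} together with an honest acknowledgment that the stated conjecture remains open.
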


The factor $\log e(H)$ is necessary in some important examples, such as when $H$ is a perfect matching or Hamiltonian cycle. In both cases $\pE(H) \asymp 1/n$ but $\pcrit(H)\asymp (\log n)/n$ (\cite{MR125031,erdHos1966existence, MR389666,MR0434878}); for more details see the discussion in \cite{kahn2007thresholds}. The above conjecture remains open, although related conjectures of \cite{kahn2007thresholds,MR2743011} were proved recently (\cite{fracKK_annals,park2022proof}), {as we review below.}

\subsection{Main result}

In this article we introduce a natural variant $\pEnew(H)$ of $\pE(H)$, and show that it captures $\pcrit(H)$ up to a logarithmic factor, thus proving a modification of \cite[Conjecture~2]{kahn2007thresholds}. The modification is based on the simple observation that if $G=G(n,p)$ contains a copy of $H$, then we must have $M_{H',G} \ge M_{H',H}$ for any subgraph $H'$ of $H$ --- in contrast with the weaker bound $M_{H',G}\ge1$, which is used to show $\pE(H) \le \pcrit(H)$. Thus, if we define the  \emph{``modified subgraph expectation threshold''} 
	\beq\label{e:pEnew}
	\pEnew(H)
	=\min\bigg\{ p : \E M_{H',G(n,p)} \ge \frac{M_{H',H}}{2}
	\textup{ for all } H' \subseteq H\bigg\}\,,
	\eeq
then it is easy to see that $\pE(H) \le \pEnew(H) \le \pcrit(H)$ {(see also \S\ref{ss:basicobs} below).} Our main result is that the
new lower bound $\pEnew(H)\le\pcrit(H)$ is tight up to a logarithmic factor:

\begin{theorem}\label{thm:main}
It holds that $\pcrit(H) \le L \pEnew(H)\log e(H)$ for a universal constant $L$.
\end{theorem}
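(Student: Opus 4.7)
My plan is to deduce Theorem~\ref{thm:main} as a direct application of the spread lemma at the heart of the FKNP proof of the fractional Kahn--Kalai conjecture \cite{fracKK_annals}. Recall the spread lemma: there is a universal constant $K$ such that any family $\mathcal{F}$ of subsets of a finite set $X$, all of size at most $\ell$, that is $q$-spread --- meaning $|\{A \in \mathcal{F} : S \subseteq A\}| \le q^{|S|}\,|\mathcal{F}|$ for every $S \subseteq X$ --- contains an element inside a $p$-random subset of $X$ with probability at least $1/2$ whenever $p \ge Kq\log\ell$. I would take $X = E(K_n)$ and $\mathcal{F}$ to be the set of edge-sets of copies of $H$ in $K_n$, so every $A \in \mathcal{F}$ has size $e(H) =: \ell$ and $|\mathcal{F}| = M_{H, K_n}$; the event that some $A \in \mathcal{F}$ is contained in the random subset coincides exactly with the event that $G(n, p)$ contains a copy of $H$. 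It therefore suffices to show that $\mathcal{F}$ is $(2\pEnew(H))$-spread.

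The crux of the plan is the double-counting identity
\[
\frac{|\{A \in \mathcal{F} : S \subseteq A\}|}{|\mathcal{F}|} = \frac{M_{H_S, H}}{M_{H_S, K_n}}\,,
\]
valid for every $S \subseteq E(K_n)$, where $H_S$ denotes the edge-induced subgraph on $S$ and $M_{H_S, H} := 0$ if $H_S$ is not isomorphic to a subgraph of $H$. I would establish this by counting pairs $(\phi, A)$ with $\phi : V(H) \hookrightarrow [n]$ an injective embedding and $A = \phi(E(H)) \supseteq S$: each $A \in \mathcal{F}$ arises from $|\Aut(H)|$ such embeddings, and each injective homomorphism $H_S \to H$ extends to such a $\phi$ in $(n - v(H_S))!/(n - v(H))!$ ways, after which the automorphism factors $|\Aut(H)|$ and $|\Aut(H_S)|$ cancel cleanly between the two sides. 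Combining the identity with \eqref{e:pEnew} yields $M_{H_S, H}/M_{H_S, K_n} \le 2\,\pEnew(H)^{|S|} \le (2\pEnew(H))^{|S|}$ for $|S| \ge 1$ (the $|S| = 0$ case is trivial), which is the desired spread bound.

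Applying the spread lemma with $q = 2\pEnew(H)$ and $\ell = e(H)$ then gives $\pcrit(H) \le 2K\,\pEnew(H)\log e(H)$, completing the proof with $L = 2K$. The main (essentially only) obstacle is the combinatorial bookkeeping in the double-counting identity --- keeping careful track of ``copies'' versus ``injective embeddings'' and the roles of the two automorphism groups --- after which the logarithmic factor and all probabilistic content come for free from the spread lemma. It is worth remarking that the naive attempt to apply the same argument with $\pE(H)$ in place of $\pEnew(H)$ fails at exactly this step: the definition \eqref{e:pE} controls only $M_{H',K_n}p^{e(H')} \ge 1/2$, which is not strong enough to yield the required $q$-spread bound on $\mathcal{F}$ when $H$ contains many copies of $H'$.
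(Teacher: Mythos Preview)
Your proposal is correct and follows essentially the same approach as the paper: show that the uniform measure on copies of $H$ in $K_n$ is $R$-spread with $R = 1/(2\pEnew(H))$ via the identity $M_{H_S,H}/M_{H_S}$, then invoke the spread lemma of \cite{fracKK_annals}. The only difference is cosmetic: the paper obtains the double-counting identity in one line by symmetry (since all copies of $H_S$ in $K_n$ are equivalent under $S_n$, $\pi_H(J_0\subseteq\bm{H})=\pi_J(\bm{J}\subseteq H_0)=M_{J,H}/M_J$), which saves you the automorphism bookkeeping.
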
 

A straightforward but tedious calculation gives that $\pEnew(H)\asymp 1/n$ when $H$ is a Hamiltonian cycle. Therefore, as with
\cite[Conjecture~2]{kahn2007thresholds}, the factor $\log e(H)$ is indeed necessary for Theorem~\ref{thm:main} to hold.

\subsection{Comparison with previous work}

The works most closely related to ours arise from the study of  \cite[Conjecture~1]{kahn2007thresholds}. 
 This ``first Kahn--Kalai conjecture'' applies more broadly to the setting of \emph{all monotone properties}, but is weaker than the second Kahn--Kalai conjecture (\cite[Conjecture~2]{kahn2007thresholds}) in the setting of \emph{graph inclusion properties} (which are the focus of this article). The first Kahn--Kalai conjecture states that for any monotone property $\mathscr{F}\subseteq\{0,1\}^X$, we have
	\[
	\pcrit(\mathscr{F}) \le q(\mathscr{F}) \log \ell(\mathscr{F})
	\]
where $q(\mathscr{F})$ is the \emph{maximum first moment threshold among all covers of $\mathscr{F}$}, and $\ell(\mathscr{F})$ is the size of a largest minimal element of $\mathscr{F}$. Talagrand \cite{MR2743011} proposed a relaxation of the above, the so-called ``fractional Kahn--Kalai conjecture''
	\[
	\pcrit(\mathscr{F}) 
	\le \qfrac(\mathscr{F}) \log \ell(\mathscr{F})
	\]
where $q(\mathscr{F})$ is the \emph{maximum first moment threshold among all fractional covers of $\mathscr{F}$}. It is trivial that $q(\mathscr{F})\le \qfrac(\mathscr{F})\le \pcrit(\mathscr{F})$. Both conjectures were long-standing open problems, which were resolved only recently in two notable works \cite{fracKK_annals,park2022proof}.

 In comparison, the second Kahn--Kalai conjecture is an even stronger conjecture in the particular setting of graph inclusion properties; Kahn and Kalai referred to it as their ``starting point'' in formulating their first conjecture. If $\mathscr{F}_H$ is the property of containing a copy of some graph $H$, then the threshold $\pE(H)$ of \eqref{e:pE} is the \emph{maximum first moment threshold among all ``subgraph containment covers'' of $\mathscr{F}_H$}. Therefore $\pE(H) \le q(\mathscr{F})$, and in the graph inclusion setting the first Kahn--Kalai conjecture may be viewed as a relaxation of the second. To the best of our knowledge, beyond the results on the first conjecture, no further progress has been made on the second one; and it has been reiterated in various places \cite{filmus2014real,fracKK_annals}. In this work, we modify and prove the second Kahn--Kalai conjecture (Theorem \ref{thm:main}). It is an interesting question whether Theorem~\ref{thm:main} can be used to prove (or disprove) the second Kahn--Kalai conjecture.

Interestingly, \emph{for graph inclusion properties}, our result slightly improves on the fractional Kahn--Kalai conjecture (posed by \cite{MR2743011} and proved by \cite{fracKK_annals} for general monotone properties). Indeed, our modified subgraph expectation threshold $\pEnew(H)$ can be interpreted as the 
\emph{maximum first moment threshold among certain
``subgraph containment fractional covers'' of $\mathscr{F}_H$}. To make the correspondence, using the notation of \cite{fracKK_annals}, for any subgraph $H'$ of $H$ one can assign weight $g_{H'}(S)=1/M_{H',H}$ to any subgraph $S\subseteq K_n$ that is a copy of $H'$. This leads to a fractional cover of $\mathscr{F}_H$ whose first-moment threshold is the unique $p$ satisfying $\E M_{H',G(n,p)} = M_{H',H}/2$.  It follows that $\pEnew(H) \le \qfrac(\mathscr{F}_H)$. Thus, our result implies the fractional Kahn--Kalai bound for graph inclusion properties, in fact with an explicit choice of fractional covers. Whether our result implies the original first Kahn--Kalai conjecture remains an interesting open problem.

\section{Proofs}

\subsection{Basic notations and calculations}
\label{ss:basicobs}
 Recall that $M_{H',H}$ denotes the number of (labelled) subgraphs of $H$ which are copies of $H'$. We abbreviate $M_H\equiv M_{H,K_n}$ where $K_n$ is the complete graph on $n$ vertices. We also abbreviate $Z_H\equiv M_{H,G}$ where $G$ is the Erd\H{o}s--R\'enyi graph $G(n,p)$. Writing $\PP_p$ for the law of $G(n,p)$, recall that
	\[
	\pcrit(H)
	= \inf\bigg\{ p : \PP_p(Z_H\ge1) \ge \frac12\bigg\}\,.
	\]
If $p \ge \pcrit(H)$, then Markov's inequality gives
	\[
	\frac12 \le \PP_p(Z_H\ge1) \le \PP_p\Big(Z_{H'}\ge1
		\ \forall H' \subseteq H\Big)
	\le \min\bigg\{ \E Z_{H'} : H'\subseteq H\bigg\}
	\,,
	\]
which implies $\pE(H) \le \pcrit(H)$ for $\pE(H)$ as defined by \eqref{e:pE}. Our definition \eqref{e:pEnew} of $\pEnew(H)$ is based on the simple observation that in fact $p \ge \pcrit(H)$ together with Markov's inequality implies
	\[
	\frac12 \le \PP_p(Z_H\ge1) \le \PP_p\Big(Z_{H'}\ge M_{H',H}
		\ \forall H' \subseteq H\Big)
	\le \min\bigg\{ \frac{\E Z_{H'}}{M_{H',H}}
	 : H'\subseteq H\bigg\}
	\,,
	\]
therefore $\pEnew(H) \le \pcrit(H)$. 
It is clear that
$\pE(H) \le\pEnew(H)$; moreover, if $\E_p$ denotes expectation with respect to $\PP_p$, then
 $\E_p Z_{H'} = M_{H'} p^{e(H')}$, so we can rewrite
 \eqref{e:pE} as
 	\[
	\pE(H)
	= \max\bigg\{
	\bigg(\frac{1}{2M_{H'}}\bigg)^{1/e(H')}
	: H'\subseteq H\bigg\}\,,
	\]
and likewise we can rewrite \eqref{e:pEnew} as
	\beq\label{e:gen.first.mmt.threshold}
	\pEnew(H)
	= \max\bigg\{
	\bigg(\frac{M_{H',H}}{2M_{H'}}\bigg)^{1/e(H')}
	: H'\subseteq H\bigg\}\,.
	\eeq

\subsection{The spread lemma} 
The proof of Theorem \ref{thm:main} is an application of a powerful tool, which we refer to as the ``spread lemma.'' Various forms of this lemma have played a key role in establishing recent breakthrough results on the sunflower theorem \cite{sunflower_annals} (see also \cite{Rao_sunflower, Tao_sunflower}) and the proof of the {fractional Kahn--Kalai conjecture}  \cite{fracKK_annals}.

To state the lemma in our setting, let $\pi$ be an arbitrary distribution over subgraphs of $K_n$ (e.g., the copies of particular subgraph of $K_n$). For $R>1$, we say that $\pi$ is \textit{$R$-spread} if for all (without loss of generality, nonempty) subgraphs $J_0\subseteq K_n$,  if $\bm{H}\sim\pi$ then
\beq\label{spread_cond}
\pi(J_0 \subseteq \bm{H}) \leq R^{-e(J_0)}.
\eeq
 Then the spread lemma as stated in \cite[Theorem 1.6]{fracKK_annals} applied to graph inclusion properties implies the following result.

\begin{lemma}\label{spread_lemma}
Fix integers $k,M \geq 1$. Let $G_1,\ldots,G_{M}$ be subgraphs of $K_n$ with $e(G_i) \leq k, i \in [M]$. For some universal constant $C>0$, if the uniform distribution $\pi$ over $\{G_1,\ldots,G_M\}$ is $R$-spread and  $p>C\frac{ \log k}{R} $, then a sample from $G(n,p)$ contains one of the $G_i$'s with probability at least $1/2$.
\end{lemma}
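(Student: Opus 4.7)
The plan is to prove the spread lemma via the iterative \emph{halving scheme} developed by Alweiss--Lovett--Wu--Zhang for the sunflower theorem and refined by Frankston--Kahn--Narayanan--Park for the fractional Kahn--Kalai conjecture; the statement as given is exactly \cite[Theorem~1.6]{fracKK_annals} specialized to graph-inclusion properties, so the proof amounts to tracing their argument in this setting.

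First I would \emph{slice} the random graph: set $\ell = \lceil \log_2 k\rceil + 1$ and write $W \sim G(n,p)$ as a union $W_0 \cup \cdots \cup W_{\ell-1}$ of independent copies of $G(n,q)$, choosing $q$ so that $(1-q)^\ell = 1-p$. This yields $q = \Theta(p/\log k)$, and the hypothesis $p > C \log k / R$ gives $q \geq c/R$ for a constant $c$ that may be taken as large as desired by enlarging $C$.

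The technical core is a \emph{halving lemma}: if $\pi$ is $R$-spread on a family of subgraphs of $K_n$ each having at most $k'$ edges, and $W'\sim G(n,q)$ is independent of $\pi$ with $q\geq c/R$ and $c$ sufficiently large, then with probability at least $1-\tfrac{1}{2\ell}$ there exists $G$ in the support of $\pi$ with $|E(G)\setminus W'|\leq k'/2$. Granting this, I would \emph{iterate}: start from $\pi_0 = \pi$ with $k_0 = k$, and at stage $j$ apply the halving lemma with $W_j$ to the current distribution $\pi_j$ on residual edge-sets of size at most $k_j = k\cdot 2^{-j}$, restrict to the surviving branch, and subtract $W_j$ from each residual to obtain $\pi_{j+1}$. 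Both restricting to a sub-support and subtracting a fixed set preserve $R$-spreadness, since any $S$ contained in a residual is a fortiori contained in the original $G$, so the bound $\pi(S\subseteq \bm{H}) \leq R^{-|S|}$ persists. After $\ell$ stages, $k_\ell < 1$, hence some $G_i$ is fully contained in $W_0\cup\cdots\cup W_{\ell-1}\subseteq W$; a union bound over the $\ell$ stages gives total success probability at least $1/2$.

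The main obstacle is the halving lemma itself. Its proof is a canonical encoding / union bound: condition on failure, so every $G$ in the support has $|E(G)\setminus W'| > k'/2$. Sample $\bm{H}\sim\pi$ independently of $W'$, and read off a canonically chosen \emph{minimal obstruction} $S\subseteq E(\bm{H})\setminus W'$ of size $s = \lceil k'/2\rceil$. By $R$-spreadness, $\pi(S\subseteq \bm{H})\leq R^{-s}$ for each fixed $S$, and by independence of $\bm{H}$ and $W'$ one has $\PP(S\cap W' = \emptyset) = (1-q)^s$. Summing the joint probability over the at most $\binom{\binom{n}{2}}{s}$ candidates $S$ and over $s\geq k'/2$, and using $q\geq c/R$ with $c$ sufficiently large, yields a geometric tail bound at most $1/(2\ell)$. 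The delicate point---addressed carefully in \cite{sunflower_annals,fracKK_annals}---is that the ``minimal obstruction'' must be chosen by a deterministic rule so that the encoding of $(\bm{H},W')$ via $(S,\text{residual data})$ is injective and the resulting union bound does not overcount.
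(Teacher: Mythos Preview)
Your proposal reproves the cited result of Frankston--Kahn--Narayanan--Park from scratch, whereas the paper's proof is a two-line black-box appeal to \cite[Theorem~1.6]{fracKK_annals}: one chooses $C$ large enough that $G(n,p)$ with $p>C(\log k)/R$ contains, with probability at least $2/3$, a uniformly random $m$-edge subgraph of $K_n$ with $m=K(\log k)R^{-1}\binom{n}{2}$ (a standard $G(n,p)$--$G(n,m)$ coupling), and then \cite[Theorem~1.6]{fracKK_annals} applied to $X=E(K_n)$ and $\kappa=R$ finishes. So the paper's argument is much shorter and entirely delegates the ``halving scheme'' to the reference; your version is more self-contained but is essentially re-deriving what the paper cites.

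One genuine imprecision in your iteration: the assertion that ``restricting to a sub-support preserves $R$-spreadness'' is false as written. Conditioning on an event of probability $1-\delta$ inflates all probabilities by a factor $(1-\delta)^{-1}$, so the spread parameter of $\pi_{j+1}$ is not $R$ but only roughly $R(1-\delta)$, and after $\ell$ rounds this degradation must be tracked. Your justification (``any $S$ contained in a residual is a fortiori contained in the original $G$'') establishes only that \emph{subtracting a fixed set} preserves spreadness; it says nothing about the normalization after restriction. The standard fixes---either carry the original $\pi$ through all $\ell$ rounds without ever restricting (as in Rao's and Tao's expositions), or absorb the $(1-\tfrac{1}{2\ell})^{-1}$ factors into the constant $c$---are routine, and since you defer the delicate encoding step to \cite{sunflower_annals,fracKK_annals} anyway, the outline is salvageable. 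But as stated, the inductive hypothesis ``$\pi_j$ is $R$-spread'' does not follow from your argument.
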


\begin{proof}We choose a sufficiently large constant $C>K$ where $K$ is the universal constant from \cite[Theorem 1.6]{fracKK_annals}. 
From standard concentration results, when $C>K$ is large enough, a sample from $G(n,p)$ contains, with probability at least $2/3$,
 a uniformly random undirected graph on $n$ vertices and $K\frac{ \log k}{R}\binom{n}{2}$ edges. The result then follows directly from \cite[Theorem 1.6]{fracKK_annals} applied to $X=K_n$ and $\kappa=R$.
\end{proof}

\subsection{The proof}

We now show that Theorem~\ref{thm:main} follows easily from Lemma~\ref{spread_lemma}:

\begin{proof}[Proof of Theorem~\ref{thm:main}]
Let $\pi\equiv\pi_H$ denote the uniform distribution over all the copies of $H$ in the complete graph $K_n$, and let $\bm{H}$ denote a sample from $\pi$. For a {nonempty} $J \subseteq H$, let $\pi_J$ denote the uniform distribution over all the copies of $J$ in $K_n$, and let $\bm{J}$ denote a sample from $\pi_J$. Let $J_0,H_0$ be any fixed copies of $J,H$ respectively in $K_n$. Then we have
	\beq\label{double_count} 
	\pi_H(J_0\subseteq\bm{H})
	=\pi_J( \bm{J} \subseteq H_0 )
	= \frac{M_{J,H}}{M_J}\,.
	\eeq
(The first equality is by symmetry. The second holds because there are $M_J$ possibilities for $\bm{J}$, of which $M_{J,H}$ are contained in $H_0$.) By combining \eqref{double_count}  with the definition \eqref{e:gen.first.mmt.threshold} of $\pEnew(H)$, we find
	\[
	\pi_H(J_0\subseteq\bm{H})
	\stackrel{\eqref{double_count}}{=}
		\frac{M_{J,H}}{M_J}
	\stackrel{\eqref{e:gen.first.mmt.threshold}}{\le}
		2 \pEnew(H)^{e(J)}
	\le \bigg(
	\frac1{2 \pEnew(H)}
	\bigg)^{-e(J)}\,,
	\]
where the last inequality uses that $e(J) \ge1$. Since $J_0$ is arbitrary, we conclude that $\pi$ is $R$-spread with
$R= 1/(2\pEnew(H))$. An appeal to Lemma \ref{spread_lemma} with $k=e(G)$ concludes the proof.
\end{proof}

\section*{Acknowledgements}
We thank Youngtak Sohn for helpful feedback on an earlier draft of this work. I.Z. thanks Dan Mikulincer for helpful discussions. 
We also acknowledge the support of
Simons-NSF grant DMS-2031883 (E.M., N.S., and I.Z.),
the Vannevar Bush Faculty Fellowship ONR-N00014-20-1-2826 (E.M.\ and I.Z.), the Simons Investigator Award 622132 (E.M.),
the Sloan Research Fellowship (J.N.W.),
 and NSF CAREER grant DMS-1940092 (N.S.).

\vfill

{\raggedright
\bibliographystyle{alphaabbr}
\bibliography{pc}}

\newcommand{\etalchar}[1]{$^{#1}$}
\begin{thebibliography}{ALWZ21}

\bibitem[ALWZ21]{sunflower_annals}
R.~Alweiss, S.~Lovett, K.~Wu, and J.~Zhang.
\newblock Improved bounds for the sunflower lemma.
\newblock {\em Ann. of Math. (2)}, 194(3):795--815, 2021.

\bibitem[Bol81]{bela_fixed}
B.~Bollob{\'a}s.
\newblock Random graphs.
\newblock {\em London Math. Soc. Lec. Note Series}, 52:80--102, 1981.

\bibitem[ER60]{MR125031}
P.~Erd\H{o}s and A.~R\'{e}nyi.
\newblock On the evolution of random graphs.
\newblock {\em Magyar Tud. Akad. Mat. Kutat\'{o} Int. K\"{o}zl.}, 5:17--61,
  1960.

\bibitem[ER66]{erdHos1966existence}
P.~Erd{\H{o}}s and A.~R{\'e}nyi.
\newblock On the existence of a factor of degree one of a connected random
  graph.
\newblock {\em Acta Math. Acad. Sci. Hungar}, 17:359--368, 1966.

\bibitem[FHH{\etalchar{+}}14]{filmus2014real}
Y.~Filmus, H.~Hatami, S.~Heilman, E.~Mossel, R.~O’Donnell, S.~Sachdeva,
  A.~Wan, and K.~Wimmer.
\newblock Real analysis in computer science: A collection of open problems.
\newblock {\em Preprint available at https://simons. berkeley.
  edu/sites/default/files/openprobsmerged. pdf}, 2014.

\bibitem[FKNP21]{fracKK_annals}
K.~Frankston, J.~Kahn, B.~Narayanan, and J.~Park.
\newblock Thresholds versus fractional expectation-thresholds.
\newblock {\em Ann. of Math. (2)}, 194(2):475--495, 2021.

\bibitem[KK07]{kahn2007thresholds}
J.~Kahn and G.~Kalai.
\newblock Thresholds and expectation thresholds.
\newblock {\em Combin. Probab. Comput.}, 16(3):495--502, 2007.

\bibitem[Kor76]{MR0434878}
A.~D. Kor\v{s}unov.
\newblock Solution of a problem of {P}. {E}rd{\H{o}}s and {A}. {R}\'{e}nyi on
  {H}amiltonian cycles in undirected graphs.
\newblock {\em Dokl. Akad. Nauk SSSR}, 228(3):529--532, 1976.

\bibitem[P{\'o}s76]{MR389666}
L.~P{\'o}sa.
\newblock Hamiltonian circuits in random graphs.
\newblock {\em Discrete Math.}, 14(4):359--364, 1976.

\bibitem[PP22]{park2022proof}
J.~Park and H.~T. Pham.
\newblock A proof of the {K}ahn--{K}alai conjecture.
\newblock {\em arXiv preprint arXiv:2203.17207}, 2022.

\bibitem[Rao20]{Rao_sunflower}
A.~Rao.
\newblock Coding for sunflowers.
\newblock {\em Discrete Analysis}, (2), 2020.

\bibitem[Tal10]{MR2743011}
M.~Talagrand.
\newblock Are many small sets explicitly small?
\newblock In {\em Proc.\ 42nd STOC}, pages 13--35. ACM, New York, 2010.

\bibitem[Tao20]{Tao_sunflower}
T.~Tao.
\newblock The sunflower lemma via {S}hannon entropy.
\newblock Blog post,
  \url{https://terrytao.wordpress.com/2020/07/20/the-sunflower-lemma-via-shannon-entropy},
  2020.

\end{thebibliography}
\end{document}